\numberwithin{equation}{section}
\newcommand{\sm}[4]{\left(\begin{smallmatrix}#1&#2\\ #3&#4
\end{smallmatrix} \right)}
\newtheorem{theorem}{Theorem}
\newtheorem{lemma}[theorem]{Lemma}
\newtheorem{proposition}[theorem]{Proposition}
\theoremstyle{definition}
\newtheorem{definition}[theorem]{Definition}
\theoremstyle{remark}
\newtheorem{remark}{Remark}
\numberwithin{equation}{section}
\newcommand{\R}{\mathbb{R}}
\newcommand{\C}{\mathbb{C}}
\newcommand{\Z}{\mathbb{Z}}
\begin{document}
\title{Quasimodular forms and $s\ell(m|m)^\wedge$ characters}

\author{Kathrin Bringmann}
\address{Mathematical Institute,  University of Cologne, Weyertal 86-90, 50931 Cologne,
Germany} \email{kbringma@math.uni-koeln.de}
\author{Amanda Folsom}
\address{Yale University, Mathematics Department, P.O. Box 208283,
New Haven, CT 06520-8283} \email{amanda.folsom@yale.edu}
\author{Karl Mahlburg}
\address{Department of Mathematics, Louisiana State University, Baton Rouge, LA 70803} \email{mahlburg@math.lsu.edu}
\thanks{The research of the first author was supported by the Alfried Krupp Prize for Young University Teachers of the Krupp Foundation.  The second author is grateful for the support of National Science Foundation Grant DMS-1049553.  The third author was supported by NSF Grant DMS-1201435. }

\maketitle
\begin{abstract}
 In this paper, we establish automorphic properties and asymptotic behaviors of characters due to Kac-Wakimoto pertaining to $s\ell(m|n)^\wedge$ highest weight modules in the case $m=n$, extending work of the first author and Ono \cite{BOKac} and the first two authors \cite{BF} which pertains to the case $m>n$.
\end{abstract}

\section{Introduction}\label{introsec}
Modular forms and Lie algebras are intimately connected by the ``Monstrous Moonshine" phenomenon, which was first conjectured by Conway and Norton \cite{CN} in 1979, and was ultimately proved by Borcherds \cite{B} in 1991.  Moonshine relates the Fourier coefficients of the modular $j$-function
$$
j(\tau) = q^{-1} + 744 + 196884q + 21493760 q^2 + \cdots
$$
($q := e^{2 \pi i \tau}$), the Hauptmodul  for $\textnormal{SL}_2(\mathbb Z)$, to dimensions of irreducible representations of the Monster group, the largest of the sporadic finite simple groups.  An important predecessor of Moonshine was the seminal work of Kac \cite{K}, who first exhibited a link between infinite-dimensional Lie algebras and modular forms.  In particular, Kac's work in \cite{K} introduced the key idea that combinatorial identities can be obtained by comparing two different calculations of characters of representations of infinite-dimensional Lie algebras.

More recently, Kac and Wakimoto \cite{KW2} found a specialized character formula pertaining to the affine Lie superalgebra $s\ell(m,1)^\wedge$ for $tr_{L_{m,1}(\Lambda_{(r)})}q^{L_0}$ for any positive even integer $m$, where $L_{m,1}(\Lambda_{(r)})$ is the irreducible $s\ell(m,1)^\wedge$-module with highest weight $\Lambda_{(r)}$, and $L_0$ is the ``energy operator". In \cite{BOKac}, the first author and Ono answered a question of Kac regarding the ``modularity" of the characters $tr_{L_{m,1}(\Lambda_{(r))}}q^{L_0}$, and proved that in fact they are not modular forms, but are instead essentially holomorphic parts of harmonic weak Maass forms, which are non-holomorphic relatives to ordinary modular forms. In subsequent work \cite{F}, the second author related these characters to universal mock theta functions, and in \cite{BFasy}, the first two authors were able to exploit the ``mock-modularity" of the Kac-Wakimoto $s\ell(m,1)^\wedge$-module characters to exhibit their detail!
 ed asymptotic behaviors, extending results in \cite{KW2}.

Despite this progress made in understanding number theoretic properties of $s\ell(m,1)^\wedge$-characters, the more general problem of understanding  the ``modularity" and asymptotic behaviors of the Kac-Wakimoto $s\ell(m,n)^\wedge$-characters $tr_{L_{m,n}(\Lambda_{(r)})}q^{L_0}$ for arbitrary $n\in \mathbb N$ proved to be of a different nature than the case $n=1$.
Developing new methods, the first two authors \cite{BF} proved that in fact, the Kac-Wakimoto characters $tr_{L_{m,n}(\Lambda_{(r)})}q^{L_0}$ for integers $m>n$ are essentially the holomorphic parts of newly defined automorphic objects named ``almost harmonic Maass forms".
Loosely speaking these functions are sums of harmonic weak Maass forms under iterates of the raising operator multiplied by almost holomorphic modular forms.
The space of almost harmonic Maass forms contains harmonic weak Maass forms, which are the automorphic objects underlying the modern theory of mock modular forms \cite{BODys}, as a special class.  We refer the reader to \cite{BF} for more details on these automorphic functions.

We establish automorphic properties and asymptotic behaviors of characters due to Kac-Wakimoto pertaining to $s\ell(m|n)^\wedge$ highest weight modules in the case $m=n$, extending work of the first two authors \cite{BF} on the cases $m>n$.  For simplicity, we assume throughout that $m$ is even.

We point out that unlike the case of $n=1$ studied in \cite{BOKac}, one does not have the luxury of beginning with a multivariable Appell-Lerch sum expression for $tr_{L_{m,n}(\Lambda_{(r)})}q^{L_0}$  for arbitrary $n$.
Instead, we consider the generating function as given by Kac and Wakimoto for the specialized characters $\textnormal{ch}F_r(\tau)$
\begin{align}\label{KWgenf}
\textnormal{ch}F := \sum_{r \in \mathbb Z}  \textnormal{ch}F_r(\tau) \ \zeta^r
 =e^{\Lambda_0}\prod_{k\geq 1}
\frac{\prod_{i=1}^m\left(1+\zeta w_i q^{k-\frac{1}{2}}\right)\left(1+\zeta^{-1}w_i^{-1}q^{k-\frac{1}{2}}\right)}{\prod_{j=1}^{m}\left(1-\zeta
w_{m+j}
q^{k-\frac{1}{2}}\right)
\left(1-\zeta^{-1}w_{m+j}^{-1}q^{k-\frac{1}{2}}\right)},
\end{align}
 where $\textnormal{ch}F_r(\tau) = \textnormal{ch}F_{r,m}(\tau)$, $\zeta  := e^{2\pi i z}$, and $e^{\Lambda_0}$ and $w_s$ are certain operators \cite{KW2}, which are to be considered as formal variables.
To simplify the situation, we set $e^{\Lambda_0}$ and all $w_s$ equal to $1$ in \eqref{KWgenf}.  Once we have established the automorphic properties of the characters $\textnormal{ch}F_r(\tau)$, it is not difficult to deduce the automorphic properties of the characters $\textnormal{tr}_{L_{m,m}(\Lambda_{(r)})}q^{L_0}$ using the relationship (see \cite{KW2})
  \begin{align}\label{chtrgenrel}
  \textnormal{tr}_{L_{m, m}(\Lambda_{(r)})}q^{L_0} = \textnormal{ch}F_r(\tau) \cdot \prod_{k\geq 1}\left(1-q^k\right),
  \end{align}
  and the fact that $q^{\frac{1}{24}}\prod_{k\geq 1} (1-q^k) = \eta(\tau)$ is a well known modular form of weight $1/2$ (see \S \ref{autosec}).
We adopt the following notation for the specialized Kac-Wakimoto characters \eqref{KWgenf}:
\begin{equation}
\label{chphiformula1}
 \text{ch}F
 =i^m \varphi\left(z + \frac{\tau}{2};\tau\right),
\end{equation}
where $\varphi$ is an explicit quotient of Jacobi theta functions defined in \eqref{phidef}.

In light of \eqref{chphiformula1}, we proceed in our study of the Kac-Wakimoto characters by investigating the automorphic properties of the Fourier coefficients of $\varphi(z;\tau)$
  $$
  \varphi(z;\tau) = \sum_{r\in\mathbb Z} \chi_r(\tau) \zeta^r.
  $$
\begin{remark}\label{explain}  The automorphic properties of the Kac-Wakimoto characters $\textnormal{ch} F_r(\tau)$ may be deduced from those of $\chi_r(\tau)$, as discussed in \cite{BF}. More specifically, moving from $\varphi(z;\tau)$ to $\text{ch}F$ essentially corresponds to the elliptic shift $z \mapsto z +\frac{\tau}{2}$ applied to the function $\varphi(z;\tau)$ (see \S 3 of \cite{BF} for more detail). Note that the coefficients depend on the range in which $\zeta$ is taken.
Throughout we assume that $0<\text{Im}(z)<\text{Im}(\tau)$. We note that this (up to the boundary case) can be achieved by elliptic transformations.
By moving to a different range ``wallcrossing" occurs (see \cite{BF} for details).
\end{remark}

Our first main result establishes the automorphic properties of $\chi_r(\tau)$.  In what follows, we define the Nebentypus character $\psi$ for matrices $\gamma = \sm{a}{b}{c}{d} \in \Gamma_0(2)$ by
  \begin{align}\label{psichar} \psi\left(\gamma\right)=\psi_m(\gamma) := (-1)^{\frac{mc}{4}}.\end{align}

\noindent
Moreover, we require the well-known Eisenstein series $E_{2j}(\tau)$, which are defined in \eqref{eis}.  For $j\geq 2$, they  are holomorphic modular forms, while $E_2(\tau)$ is a \emph{quasimodular} form (see \S \ref{autosec}).  The Bernoulli numbers $B_{r}$
 are defined for integers $r\geq 0$ by the generating function
\begin{align}\label{bergen}
\frac{t}{e^t-1}=\sum_{r \geq 0} B_r \frac{t^r}{r!}.
\end{align}
 \begin{theorem}\label{prop1}
  For $r \in \Z$, we have
\begin{align*}
  \chi_r(\tau) &=  \frac{1}{1-q^{r}}\sum_{j=1}^{\frac{m}{2}} \frac{r^{2j-1}}{(2j-1)!}D_{2j}(\tau) ,  \  \ \ r\neq 0, \\
 \chi_0(\tau) &=
 D_0(\tau) +\sum_{j=1}^{\frac{m}{2}} \frac{ B_{2j}}{(2j)!} D_{2j}(\tau)E_{2j}(\tau),
\end{align*}
  where for each $0\leq j\leq \frac{m}{2}$, the function $D_{2j}(\tau) $ is a modular form of weight $-2j$ on $\Gamma_0(2)$ with Nebentypus character $\psi$, as defined in \eqref{psichar}.
 \end{theorem}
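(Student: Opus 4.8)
The plan is to use the fact that, after setting all auxiliary variables to $1$, the function $\varphi(z;\tau)$ of \eqref{phidef} is an \emph{even, meromorphic Jacobi form of weight $0$ and index $0$} on $\Gamma_0(2)$ with character $\psi$, having a single pole of order $m$ at $z\equiv 0$. Evenness is manifest from the explicit theta quotient, since $z\mapsto -z$ sends $\zeta\mapsto\zeta^{-1}$ and merely permutes the factors. Double periodicity in $z$ is the key structural input: invariance under $z\mapsto z+1$ is clear, while a short telescoping computation using $\zeta\mapsto\zeta q$ shows the base theta quotient is multiplied by $-1$ under $z\mapsto z+\tau$, so its $m$-th power $\varphi$ is genuinely doubly periodic because $m$ is even. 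Counting zeros of the denominator modulo the lattice pins the unique pole, of order $m$, at $z\equiv0$. I would then \emph{define} $D_{2j}(\tau):=(-2\pi i)^{2j}a_{-2j}(\tau)$ and $D_0(\tau):=a_0(\tau)$ from the Laurent expansion $\varphi(z;\tau)=\sum_{n\geq-m}a_n(\tau)z^n$; evenness forces $a_n=0$ for odd $n$, so only the $D_{2j}$ with $0\leq j\leq\frac m2$ occur. Modularity is then immediate: because the index is $0$ there is no Gaussian factor, and matching powers of $z$ in $\varphi\!\left(\tfrac{z}{c\tau+d};\gamma\tau\right)=\psi(\gamma)\varphi(z;\tau)$ gives $a_n(\gamma\tau)=\psi(\gamma)(c\tau+d)^n a_n(\tau)$, i.e.\ $D_{2j}$ is modular of weight $-2j$ on $\Gamma_0(2)$ with character $\psi$ (holomorphy, including at the cusps, following from the $q$-expansion of the theta quotient).

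For $r\neq0$ I would extract $\chi_r(\tau)=\int_{iy}^{1+iy}\varphi(z;\tau)e^{-2\pi irz}\,dz$ with $0<y<\operatorname{Im}(\tau)$ and shift the contour up by $\tau$. Double periodicity gives $\int_{iy+\tau}^{1+iy+\tau}\varphi\,e^{-2\pi irz}\,dz=q^{-r}\chi_r(\tau)$, while the only pole crossed is the order-$m$ pole at $z=\tau$; transporting its residue to $z=0$ by periodicity, the residue theorem yields $(1-q^{-r})\chi_r=2\pi i\,q^{-r}\operatorname{Res}_{z=0}\!\big[\varphi(z)e^{-2\pi irz}\big]$. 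Expanding $e^{-2\pi irz}=\sum_{\ell\geq0}\frac{(-2\pi ir)^\ell}{\ell!}z^\ell$ and reading off the coefficient of $z^{-1}$, only the odd indices $\ell=2j-1$ survive (the $a_{-1-\ell}$ vanish for even $\ell$), and after simplifying $q^{-r}/(1-q^{-r})=-1/(1-q^r)$ together with $-2\pi i(-2\pi i)^{2j-1}=(-2\pi i)^{2j}$ the factors collapse to $\chi_r=\frac{1}{1-q^r}\sum_{j=1}^{m/2}\frac{r^{2j-1}}{(2j-1)!}D_{2j}(\tau)$, which is the first assertion.

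The constant term $\chi_0$ is the delicate case, since the shifting argument degenerates. Here I would invoke the classical fact that an even elliptic function with a single pole of order $m$ at $0$ equals, up to an additive constant $C(\tau)$, the combination $\sum_{j=1}^{m/2}\frac{a_{-2j}}{(2j-1)!}\wp^{(2j-2)}(z;\tau)$ of derivatives of the Weierstrass $\wp$-function, whose principal parts match those of $\varphi$. Integrating over the horizontal cycle, every $\wp^{(2j-2)}$ with $j\geq2$ is the derivative of a periodic function and contributes $0$, whereas $\int_0^1\wp\,dz$ is a Weierstrass quasi-period equal to a nonzero multiple of $E_2(\tau)$; matching constant terms of the Laurent expansions identifies $C(\tau)$ via $a_0$ and the constant Laurent coefficients $(2j-1)!\,G_{2j}(\tau)$ of $\wp^{(2j-2)}$. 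Using $G_{2j}=-\frac{(2\pi i)^{2j}B_{2j}}{(2j)!}E_{2j}$ (equivalently $2\zeta(2j)=-\frac{(2\pi i)^{2j}B_{2j}}{(2j)!}$) together with the definition of $D_{2j}$, all constants cancel to give $\chi_0=D_0+\sum_{j=1}^{m/2}\frac{B_{2j}}{(2j)!}D_{2j}E_{2j}$; the $j=1$ term arises from the $E_2$ quasi-period and the $j\geq2$ terms from the constant Laurent coefficients of $\wp^{(2j-2)}$.

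The main obstacle is twofold, and both parts are bookkeeping rather than conceptual. First, establishing the precise transformation law of $\varphi$ --- in particular that the index is exactly $0$ (so the Laurent coefficients are genuinely modular, not merely quasimodular) and that the theta multiplier collapses to precisely $\psi(\gamma)=(-1)^{mc/4}$ --- requires carefully combining the transformation formulas of the constituent theta functions in \eqref{phidef} with the elliptic shift $z\mapsto z+\frac\tau2$; the well-definedness of $\psi$ as a $\pm1$-valued character uses $4\mid mc$, which holds since $m$ is even and $c$ is even on $\Gamma_0(2)$. Second, the $\chi_0$ computation demands matching every numerical constant (the quasi-period, the values $G_{2j}=2\zeta(2j)E_{2j}$, and the Bernoulli normalizations) so that the powers of $2\pi i$ hidden in the definition of $D_{2j}$ cancel cleanly; it is precisely this constant-chasing that produces the weights $\frac{B_{2j}}{(2j)!}$ and the appearance of the quasimodular $E_2$ alongside the holomorphic $E_{2j}$.
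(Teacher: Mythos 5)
Your proposal is correct, and its structural skeleton (ellipticity and $\Gamma_0(2)$-transformation of $\varphi$ with character $\psi$, evenness, the order-$m$ pole at lattice points, defining $D_{2j}$ from Laurent coefficients, and modularity of weight $-2j$ by matching powers of $z$) is exactly the paper's Lemmas \ref{phitrans} and \ref{hjtrans}. But the core computation is organized genuinely differently. The paper, following \cite{DMZ}, writes down the explicit polar part $\varphi^P(z;\tau)=\sum_{j}\frac{D_{2j}}{(2j-1)!}\sum_{r\neq0}\frac{r^{2j-1}}{1-q^r}\zeta^r$ as a single Fourier series, checks via $\partial_z^{2j-1}$ of geometric-type series that its principal part at $z=0$ matches that of $\varphi$, concludes by Liouville that $\varphi-\varphi^P$ is the constant $\varphi^F$, and evaluates that constant by a $z\to0$ limit using the Bernoulli generating function and the Lambert-series expansion of $E_{2j}$; both formulas of the theorem then drop out simultaneously by reading off $\zeta$-coefficients. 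You instead extract each coefficient separately: a contour-shift/residue argument for $r\neq0$ (your sign bookkeeping checks out, including $\frac{-q^{-r}}{1-q^{-r}}=\frac{1}{1-q^r}$), and for $r=0$ the classical Weierstrass route --- matching principal parts with $\sum_j\frac{a_{-2j}}{(2j-1)!}\wp^{(2j-2)}$, integrating over the horizontal cycle, and using the quasi-period $\int_{iy}^{1+iy}\wp\,dz=-\eta_1=-\frac{\pi^2}{3}E_2$ together with the constant Laurent coefficients $(2j-1)!\,G_{2j}$ and $G_{2j}=-\frac{(2\pi i)^{2j}B_{2j}}{(2j)!}E_{2j}$; I verified this reproduces $\chi_0=D_0+\sum_j\frac{B_{2j}}{(2j)!}D_{2j}E_{2j}$ exactly. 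The two methods are computationally equivalent (the paper's $\varphi^P$ is precisely the generating function of your residues, and its limit evaluation \eqref{part1}--\eqref{part3} encodes the same Eisenstein identities your $\wp$-expansion does), but each buys something: your route is more classical and makes the source of the quasimodular $E_2$ completely transparent (it is the Weierstrass quasi-period), while the paper's route produces the standalone decomposition $\varphi=\varphi^F+\varphi^P$ of Proposition \ref{prop5}, which is then completed termwise to prove Theorem \ref{prop2}; with your approach one would have to reassemble the coefficient formulas into that identity before attacking Theorem \ref{prop2}. One small correction: your parenthetical claim that the $D_{2j}$ are holomorphic at the cusps is false and should be dropped --- e.g.\ for $m=2$ one has $D_2=-4\eta^4(2\tau)/\eta^{-8}(\tau)\cdot\eta^{16}(\tau)/\eta^{16}(\tau)$, i.e.\ $D_2=-4\eta^4(2\tau)\eta^{-8}(\tau)$, which has a pole at the cusp $0$; indeed no nonzero negative-weight form can be holomorphic at all cusps, and the paper accordingly claims only weak holomorphy in Lemma \ref{hjtrans}.
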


\begin{remark}
 The modular forms $D_{2j}(\tau)$, $0\leq j\leq m/2$, are explicitly computable.  We illustrate this fact by providing examples in \S \ref{examplesec} for the cases $m=2$ and $m=4$.
\end{remark}

Our methods follow those of \cite{DMZ} and the first two authors; however, we point out that the setting in which $n=m$ is of a different nature than the case $m>n$ as considered in \cite{BF}.  To this end, we establish the following decomposition of the form $\varphi(z;\tau)$ into a completed ``finite part" $\widehat{\varphi}^F(\tau)$ and a completed ``polar part" $\widehat{\varphi}^P(z;\tau)$, defined in \eqref{phifhat} and \eqref{phiphat}, respectively.  Moreover, we establish the automorphic properties of these parts.  (See \S \ref{22sec} for a discussion of almost holomorphic forms, and \S \ref{23sec} for a discussion of Jacobi forms.)

\begin{theorem}\label{prop2} We have that
$$
\varphi(z;\tau) = \widehat{\varphi}^F(\tau) + \widehat{\varphi}^P(z;\tau),
$$
where \begin{enumerate}
\item the function $\widehat{\varphi}^F(\tau)$ is an almost holomorphic modular form of weight $0$ on $\Gamma_0(2)$ with Nebentypus character $\psi$;
\item the function $\widehat{\varphi}^P(z;\tau)$ is a (non-holomorphic) Jacobi form of weight $0$ on $\Gamma_0(2)$ with Nebentypus character $\psi$.
     \end{enumerate}
\end{theorem}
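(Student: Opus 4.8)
The plan is to follow the decomposition of meromorphic Jacobi forms developed in \cite{DMZ} and adapted in \cite{BF}, exploiting the explicit description of the Fourier coefficients $\chi_r(\tau)$ from Theorem \ref{prop1}. Since $\varphi(z;\tau)$ is, by \eqref{phidef}, a quotient of Jacobi theta functions in which the denominator vanishes to order $m$, the function is a meromorphic Jacobi form of weight $0$ on $\Gamma_0(2)$ with character $\psi$ which, in the range $0<\mathrm{Im}(z)<\mathrm{Im}(\tau)$ fixed in Remark \ref{explain}, has, modulo the period lattice, a single pole of order $m$. The first step is to record the principal part of $\varphi$ at this pole; its Laurent coefficients are, up to elementary normalizing factors, precisely the weight $-2j$ modular forms $D_{2j}(\tau)$ appearing in Theorem \ref{prop1}. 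This is the source of the factor $\tfrac{1}{1-q^{r}}$ in the coefficients $\chi_r(\tau)$ with $r\neq0$: it is the signature of an Appell--Lerch sum whose elliptic expansion reproduces an order-$m$ pole.

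Next I would construct the polar part $\widehat{\varphi}^P(z;\tau)$, defined in \eqref{phiphat}, as a linear combination $\sum_{j} D_{2j}(\tau)\,\widehat{\mathcal{R}}_{2j}(z;\tau)$, where each $\widehat{\mathcal{R}}_{2j}$ is a completed Appell-type function obtained by differentiating the level-two Appell--Lerch sum $j-1$ times in the elliptic variable and adjoining the usual Zwegers non-holomorphic correction (built from incomplete gamma functions). The $\widehat{\mathcal{R}}_{2j}$ are designed so that their total principal part matches that of $\varphi$, and so that $\widehat{\varphi}^P$ transforms as a (non-holomorphic) Jacobi form of weight $0$ on $\Gamma_0(2)$ with character $\psi$, which is assertion (2). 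By construction the difference $\widehat{\varphi}^F:=\varphi-\widehat{\varphi}^P$, defined in \eqref{phifhat}, is holomorphic in $z$; being a holomorphic Jacobi form of weight $0$ and index $0$ with no poles, it must be independent of $z$ and hence a function of $\tau$ alone. Evaluating this $z$-constant by comparing the $r=0$ Fourier coefficients and using Theorem \ref{prop1} identifies it with $D_0(\tau)+\sum_{j}\tfrac{B_{2j}}{(2j)!}D_{2j}(\tau)E_{2j}(\tau)$; the presence of the quasimodular $E_2(\tau)$ is exactly what makes $\widehat{\varphi}^F$ an almost holomorphic modular form in the sense of \S\ref{22sec}, rather than a genuine holomorphic one, giving assertion (1).

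The main obstacle will be step (2): arranging the completion so that the order-$m$ pole is handled correctly. For a simple pole this is classical Zwegers theory, but differentiating the Appell--Lerch sum to produce a pole of order $m$ destroys both the elliptic and the modular transformation behavior, since each derivative introduces lower-order anomaly terms governed by the quasimodular derivative of $E_2$. The heart of the argument is to show that these anomalies, once weighted by the $D_{2j}(\tau)$ and combined with the Eichler-integral-type completion terms, cancel against one another and reassemble precisely into the almost holomorphic combination that constitutes $\widehat{\varphi}^F$. This cancellation is also what distinguishes the balanced case $m=n$ treated here from the case $m>n$ of \cite{BF}: because the numerator and denominator in \eqref{phidef} carry equal index, the finite part collapses to a function of $\tau$ alone, and all of the genuine $z$-dependence, together with the non-holomorphic completion, is concentrated in the polar part.
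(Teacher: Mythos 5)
Your overall architecture (split off a polar part built from the Laurent coefficients, use Liouville in $z$ to identify the remainder as a function of $\tau$ alone, match the $r=0$ coefficient, and let the quasimodular $E_2$ account for the failure of holomorphic modularity) parallels the paper's proof, which runs through Proposition \ref{prop5} and the definitions \eqref{phifhat}--\eqref{phiphat}. But your step (2) rests on the wrong completion mechanism, and this is a genuine gap. In the balanced case $m=n$ the form $\varphi$ of \eqref{phidef} has weight $0$ \emph{and index $0$}, and the polar building blocks $\sum_{r\neq 0}\frac{r^{2j-1}}{1-q^r}\zeta^r$ of \eqref{phip} are not mock objects at all: up to the additive Eisenstein constant $\frac{B_{2j}}{2j}(1-E_{2j})$ appearing in \eqref{part1}, they are elliptic derivatives of a Weierstrass-type function, i.e.\ honest meromorphic Jacobi forms. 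The only obstruction to modularity in the whole decomposition is the single quasimodular term $\frac{B_2}{2!}D_2(\tau)E_2(\tau)$ in \eqref{phif}, and the entire non-holomorphic correction is the elementary term $\frac{D_2(\tau)}{4\pi v}$ in \eqref{phifhat}--\eqref{phiphat}, which is nothing more than the replacement of $E_2$ by $\widehat{E}_2$ from \eqref{e2hat}. Adjoining ``the usual Zwegers non-holomorphic correction (built from incomplete gamma functions)'' would be fatal: such completions have nonvanishing shadows (their $\partial_{\bar\tau}$-derivatives are weight $3/2$ theta series), so $\varphi$ minus such an object could never equal an almost holomorphic modular form, which by definition is a \emph{polynomial in $1/v$} with holomorphic coefficients. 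The incomplete-gamma machinery is exactly what is needed in the unbalanced case $m>n$ of \cite{BF}, where half-integral positive index forces genuine mock modularity; the point of the present paper (and of assertion (1) of the theorem) is that for $m=n$ this never arises. Consequently the ``anomaly cancellation'' you defer as the heart of the argument is not a cancellation that can be carried out---there are no incomplete-gamma anomalies to cancel, and the one real anomaly ($E_2$) is not cancelled but completed, by a single $1/v$ term moved between the two parts.

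For comparison, the paper's proof is short precisely because of this: Proposition \ref{prop5} establishes the purely holomorphic identity $\varphi=\varphi^F+\varphi^P$ by your same Liouville argument (boundedness and ellipticity of $\varphi-\varphi^P$ in $z$, then evaluation of the constant via \eqref{part1}--\eqref{part3}); then part (1) of the theorem follows from Lemma \ref{hjtrans} (modularity of the $D_{2j}$ on $\Gamma_0(2)$ with character $\psi$), the modularity of $E_{2j}$ for $j\geq 2$, and \eqref{e2hat}; and part (2) follows by difference, since $\widehat{\varphi}^P=\varphi-\widehat{\varphi}^F$ and $\varphi$ transforms as a weight $0$, index $0$ Jacobi form by Lemma \ref{phitrans}. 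Two further details in your write-up need repair even within your own framework: to produce a pole of order $2j$ one must differentiate the first-order building block $2j-1$ times in $z$ (cf.\ \eqref{diff1}), not $j-1$ times; and the object $\varphi-\widehat{\varphi}^P$ is not a ``holomorphic Jacobi form'' (it is non-holomorphic in $\tau$)---what the Liouville step actually uses is only that it is entire and elliptic in $z$.
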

We next consider the asymptotic behavior of the Kac-Wakimoto characters, generalizing results in
 \cite{BFasy}, \cite{BF},
 and \cite{KW2}.
\begin{theorem}\label{prop3}
 Let $r \in \mathbb Z$ and $m > 1 $.
If $\tau = it$, then for any
$N\in\mathbb N_0$, as $t\to 0^+$ we have
\[
{tr}_{L_{m, m}(\Lambda_{(r)})}q^{L_0}=e^{\frac{\pi t}{12}+\frac{\pi}{12 t}(3m-1)}\frac{\sqrt{t}}{2^m}
\left(\sum_{k=0}^Na_k(m,r)\frac{t^k}{ k!}+O\left(t^{N+1}\right)\right),
\]
where
\[
a_k(m,r):=(-2\pi i r)^k \mathscr E_{k,m}.
\]
Here the numbers $\mathscr E_{k,m}$ are ``higher" Euler numbers defined by the recurrence \eqref{recurEs} and the initial conditions   \eqref{E1defint} and \eqref{E2defint}.
\end{theorem}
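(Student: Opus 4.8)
The plan is to combine the automorphic structure from Theorem~\ref{prop1} (or equivalently the decomposition in Theorem~\ref{prop2}) with the relation \eqref{chtrgenrel} and a modular-transformation argument to extract the $t\to 0^+$ asymptotics. The starting point is that, by \eqref{chtrgenrel} and the factorization $q^{1/24}\prod_{k\ge1}(1-q^k)=\eta(\tau)$, we have $\mathrm{tr}_{L_{m,m}(\Lambda_{(r)})}q^{L_0} = q^{-1/24}\,\eta(\tau)\,\mathrm{ch}F_r(\tau)$, and by \eqref{chphiformula1} the characters $\mathrm{ch}F_r$ are (up to the elliptic shift $z\mapsto z+\tau/2$ and the factor $i^m$) the Fourier coefficients $\chi_r$ of $\varphi$. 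So the first step is to write $\mathrm{tr}_{L_{m,m}(\Lambda_{(r)})}q^{L_0}$ explicitly in terms of $\eta(\tau)$ and the modular objects $D_{2j}(\tau)$, $E_{2j}(\tau)$ appearing in Theorem~\ref{prop1}, keeping careful track of the prefactor $1/(1-q^r)$ for $r\ne 0$.

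Next I would invoke the modular transformation behavior under $\tau\mapsto -1/\tau$ (equivalently $t\mapsto 1/t$ when $\tau=it$). Since each $D_{2j}$ is a modular form of weight $-2j$ on $\Gamma_0(2)$ with Nebentypus $\psi$, and the $E_{2j}$ for $j\ge 2$ are genuine modular forms (with $E_2$ only quasimodular), I can apply the relevant transformation — via an Atkin--Lehner / Fricke involution adapted to level $2$ — to rewrite everything in the variable $t^{-1}$, where $q=e^{-2\pi t}\to 1$ as $t\to 0^+$. The dominant exponential growth $e^{\frac{\pi}{12t}(3m-1)}$ should emerge from the leading behavior of $\eta(\tau)$ together with the $D_{2j}$ under this involution, and the $\sqrt{t}$ factor reflects the half-integral weight contribution of $\eta$. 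After normalizing out the main exponential factor, what remains is a function that is regular as $t\to 0^+$, and the asymptotic expansion to order $t^N$ follows by Taylor expanding this regular part.

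The combinatorial content — the appearance of the ``higher'' Euler numbers $\mathscr E_{k,m}$ and the shape $a_k(m,r)=(-2\pi i r)^k\mathscr E_{k,m}$ — is the part requiring the most care. I expect this to come from expanding the $r$-dependence near $t=0$: the factor $1/(1-q^r)$ and the $r^{2j-1}$ weights in $\chi_r$ conspire, after the modular transformation, into a generating series whose coefficients are governed by the recurrence \eqref{recurEs} with initial data \eqref{E1defint}, \eqref{E2defint}. Concretely, the $(-2\pi i r)^k$ arises because the $r$-dependence enters through $q^r=e^{-2\pi i r\tau}$-type factors, and under $\tau\mapsto -1/\tau$ the variable $r$ gets paired with $\tau$, producing powers of $2\pi i r$ scaled by $t^k$; matching these against the defining recurrence identifies the coefficients as $\mathscr E_{k,m}$.

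The main obstacle will be making the interchange of limits and expansions rigorous, and in particular controlling the error term $O(t^{N+1})$ uniformly. After the modular involution the surviving factors are evaluated at $q\to 0$ where their $q$-expansions converge rapidly, so the remainder should be exponentially small; the delicate bookkeeping is in verifying that the finite sum over $1\le j\le m/2$ together with the prefactors assembles into exactly the claimed closed form, and in confirming that the $E_2$ quasimodularity contributes only to the lower-order polynomial-in-$t$ corrections rather than disturbing the leading exponential. I would handle this by isolating the purely modular part (governing the exponential and the $\sqrt{t}$) from the quasimodular completion (which, via the decomposition of Theorem~\ref{prop2} into holomorphic plus error terms, contributes controlled algebraic corrections), and then verifying the recurrence for $\mathscr E_{k,m}$ directly against the expanded series.
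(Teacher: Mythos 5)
Your overall plan---transform to the cusp at $0$, peel off the exponential, and Taylor-expand what remains---is the right general philosophy, but as written it has two concrete gaps, and they sit exactly where the real content of the theorem lies.

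First, the asymptotics of the $D_{2j}(it)$ as $t\to 0^+$ cannot be extracted from what you invoke. Theorem \ref{prop1} only gives modularity of $D_{2j}$ on $\Gamma_0(2)$ with Nebentypus $\psi$, and the cusp $0$ is \emph{not} $\Gamma_0(2)$-equivalent to $i\infty$; the matrix $\sm{0}{-1}{1}{0}$ is not in $\Gamma_0(2)$. So ``applying the relevant transformation via an Atkin--Lehner/Fricke involution'' produces new forms $D_{2j}\big|_{-2j}\sm{0}{-1}{1}{0}$ whose $q$-expansions (in particular their leading coefficients $c_j$, which you need in closed form for \emph{every} $j\le m/2$ and every even $m$) are not supplied by anything you have cited. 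The only way to compute them is to transform the two-variable function $\varphi$ itself via Lemma \ref{thetatrans}; doing so turns $\left(\vartheta\left(z+\tfrac12\right)/\vartheta(z)\right)^m$ into (essentially) $\left(\vartheta\left(z+\tfrac\tau2\right)/\vartheta(z)\right)^m$, i.e.\ it forces you back to a generating-function-level computation. That is precisely what the paper does: by Cauchy's formula and Lemmas \ref{ETtrans} and \ref{thetatrans} one obtains \eqref{trform1}, in which the whole family $\{D_{2j}\}$ never appears and the transformed theta quotient collapses to $\cosh\left(\pi u/t\right)^{-m}$ up to exponentially small errors.

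Second, and more seriously, your mechanism for producing the coefficients $a_k(m,r)=(-2\pi i r)^k\mathscr E_{k,m}$ is not a mechanism. In your route the $t$-expansion is a convolution of three series: the Bernoulli-type expansion of $(1-q^r)^{-1}$, the expansion of the shift factor $q^{r/2}$ coming from \eqref{chphiformula1}, and the unknown constants $c_j$ weighted by $r^{2j-1}t^{2j}$. Identifying that convolution with $\mathscr E_{k,m}$ requires a genuine combinatorial identity, and ``matching against the defining recurrence'' cannot be carried out as stated: the recurrence \eqref{recurEs} steps in $m$ (it relates $\mathscr E_{k,n+2}$ to $\mathscr E_{k,n}$ and $\mathscr E_{k-2,n}$), whereas your coefficients depend on $m$ only through the undetermined $c_j$'s, so verifying the recurrence would in turn require knowing how the $c_j$'s for $m+2$ relate to those for $m$ --- yet another missing computation. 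The paper sidesteps all of this because in the integral representation the coefficients appear directly as the moments $\int_{\R} u^k\cosh\left(\pi u\right)^{-m}du$, which were evaluated in \cite{BF} and shown there (via integration by parts, which is what produces \eqref{recurEs}) to equal $\mathscr E_{k,m}$. Without an argument replacing that evaluation, your proof stops exactly at the point where the Euler numbers are supposed to enter. (A smaller omission: your expansion treats $r\neq 0$; the case $r=0$ has a different formula for $\chi_0$ involving the quasimodular $E_2$ and would need separate handling, whereas the integral \eqref{trform1} covers all $r\in\Z$ uniformly.)
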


The remainder of the paper is structured as follows.  In \S \ref{autosec}, we define the associated automorphic objects and describe their key properties. In \S \ref{proofsec}, we prove Theorem \ref{prop1}, Theorem \ref{prop2}, and Theorem \ref{prop3}.  In \S \ref{examplesec}, we provide explicit examples of Theorem \ref{prop1} for the cases $m=2$ and $m=4$.

\section{Automorphic forms}\label{autosec}
Here we provide definitions and properties of various automorphic objects relevant to our treatment of the Kac-Wakimoto characters $ \textnormal{tr}_{L_{m, m}(\Lambda_{(r)})}q^{L_0}$.

\subsection{Modular forms} Assume that $\kappa\in\frac12 \Z,$ and $\Gamma$ is a congruence subgroup of either $\text{SL}_2(\Z)$ or $\Gamma_0 (4),$ depending on whether or not $\kappa\in\Z$, respectively.
The weight $\kappa$ {\it slash operator}, defined for a matrix $\gamma = \left(\begin{smallmatrix}
a & b \\ c & d
\end{smallmatrix}\right) \in\Gamma$ and any function $f:\mathbb{H} \rightarrow \C,$  is given by
$$
f\big|_\kappa \gamma (\tau) := j(\gamma, \tau)^{-2\kappa} f\left( \frac{a\tau +b}{c\tau +d} \right)
$$
where the automorphy factor is
$$
j(\gamma, \tau) := \begin{cases}
   \sqrt{c\tau +d} & \text{if } \kappa\in\Z, \\
   \left( \frac{c}d \right)  \varepsilon_d^{-1} \sqrt{c\tau +d}    & \text{if } \kappa\in\frac12 \Z \backslash \Z,
  \end{cases}
$$
and
$$
\varepsilon_d:=\begin{cases}
   1 & \text{if } d\equiv 1 \pmod{4}, \\
   i       & \text{if } d\equiv 3 \pmod{4}.
  \end{cases}
$$

\begin{definition}
\label{D:modular}
Let $\kappa\in\frac12 \Z$, $N$ a positive integer, and $\chi$ a Dirichlet character modulo $N$. A holomorphic \it{modular form of weight $\kappa$ for $\Gamma$ with Nebentypus character $\chi$} is a holomorphic function $f:\mathbb{H}\rightarrow \C$ satisfying
\begin{enumerate}
\item For all $\gamma=\left(\begin{smallmatrix}
a & b \\ c & d
\end{smallmatrix}\right)\in\Gamma$ and all $\tau\in\mathbb{H}$, we have
$
f|_\kappa \gamma (\tau ) = \chi (d)f(\tau ). $
\item The function $f$ is holomorphic at all cusps of $\Gamma$.
\end{enumerate}
\end{definition}
\emph{Meromorphic modular forms} allow poles in $\mathbb{H}$ and the cusps and \emph{weakly holomorphic forms} only in the cusps.
Meromorphic modular forms and weakly holomorphic modular forms are described in detail in most standard textbooks on modular forms, including \cite{DS}.
A modular form required here is Dedekind's $\eta$-function, defined by
 \begin{equation}
 \eta(\tau) := q^\frac{1}{24}\prod_{k\geq 1} \left(1-q^k\right)\label{etadefn}.
\end{equation}
This function  is well known to satisfy the following transformation law \cite{Rad}.
\begin{lemma}\label{ETtrans}
 For
$\gamma=\sm{a}{b}{c}{d} \in \textnormal{SL}_2(\mathbb Z)$, we have that
\begin{equation*}
\eta\left(\gamma\tau\right)  = \rho\left(\gamma\right)(c\tau + d)^{\frac{1}{2}} \eta(\tau), \label{etatrgen}
\end{equation*}
where $\rho\left(\gamma\right)$ is a $24$th root of unity, which can be given explicitly in terms of Dedekind sums \cite{Rad}.     In particular, we have that
\begin{equation*}
\eta\left(-\frac{1}{\tau} \right)= \sqrt{-i \tau} \eta(\tau).
\end{equation*}
\end{lemma}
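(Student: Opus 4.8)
The plan is to reduce the full transformation law to the behavior of $\eta$ under the two standard generators $T=\sm{1}{1}{0}{1}$ and $S=\sm{0}{-1}{1}{0}$ of $\SL_2(\Z)$, prove each of these directly, and then propagate the multiplier through arbitrary words in $S$ and $T$. Throughout I use that $\eta$ is holomorphic and nowhere vanishing on $\H$, which is clear from \eqref{etadefn}, since $q^{1/24}$ never vanishes and the product $\prod_{k\geq1}(1-q^k)$ converges to a nonzero value for $|q|<1$.

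The $T$-transformation is immediate from the product expansion \eqref{etadefn}: replacing $\tau$ by $\tau+1$ sends $q^{1/24}=e^{\pi i\tau/12}$ to $e^{\pi i/12}e^{\pi i\tau/12}$ and fixes each $q^k=e^{2\pi ik\tau}$, so
\begin{equation*}
\eta(\tau+1)=e^{\pi i/12}\,\eta(\tau),
\end{equation*}
and $e^{\pi i/12}=e^{2\pi i/24}$ is a primitive $24$th root of unity.

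The $S$-transformation, which is exactly the asserted special case $\eta(-1/\tau)=\sqrt{-i\tau}\,\eta(\tau)$, is the crux. I would argue via logarithmic derivatives. Differentiating $\log\eta$ term by term in \eqref{etadefn} and collecting divisor sums gives
\begin{equation*}
\frac{d}{d\tau}\log\eta(\tau)=\frac{\pi i}{12}E_2(\tau),\qquad E_2(\tau)=1-24\sum_{n\geq1}\sigma_1(n)q^n.
\end{equation*}
Combining this with the quasimodular transformation $E_2(-1/\tau)=\tau^2E_2(\tau)+\tfrac{6\tau}{\pi i}$ (valid since $E_2$ is quasimodular, cf.\ \S\ref{autosec}), a short computation shows that both $\eta(-1/\tau)$ and $\sqrt{-i\tau}\,\eta(\tau)$ have the same logarithmic derivative $\tfrac{\pi i}{12}E_2(\tau)+\tfrac{1}{2\tau}$ on $\H$. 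Hence their ratio is constant, and I pin the constant down by evaluating at the fixed point $\tau=i$: there $-1/i=i$ and $\sqrt{-i\cdot i}=1$, so the ratio equals $1$ and the $S$-transformation follows. The main obstacle lies precisely here, in fixing the correct branch of the square root and the exact multiplicative constant; the evaluation at $\tau=i$ resolves both.

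Finally, for general $\gamma=\sm{a}{b}{c}{d}$ I would induct on the word length in $S,T$. Suppose $\eta(\gamma_i\tau)=\rho(\gamma_i)(c_i\tau+d_i)^{1/2}\eta(\tau)$ for $i=1,2$ with root-of-unity multipliers $\rho(\gamma_i)$. Composing, the automorphy factors satisfy $(c_1(\gamma_2\tau)+d_1)^{1/2}(c_2\tau+d_2)^{1/2}=\pm(c\tau+d)^{1/2}$, where $(c,d)$ is the bottom row of $\gamma_1\gamma_2$ and the sign reflects the branch of the square root; hence $\rho(\gamma_1\gamma_2)=\pm\rho(\gamma_1)\rho(\gamma_2)$. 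The two generator multipliers $e^{\pi i/12}$ and $\sqrt{-i}=e^{-\pi i/4}=(e^{\pi i/12})^{-6}$, together with $-1=(e^{\pi i/12})^{12}$, all lie in the cyclic group $\mu_{24}$ of $24$th roots of unity generated by the primitive root $e^{\pi i/12}$; therefore induction on word length yields $\rho(\gamma)\in\mu_{24}$ for every $\gamma$. Equivalently, raising the transformation to the $24$th power and using that $\eta^{24}$ is a weight-$12$ modular form gives $\rho(\gamma)^{24}=1$, while the connectedness of $\H$ and the nonvanishing of $\eta$ force $\rho(\gamma)$ to be a constant. The explicit evaluation of $\rho(\gamma)$ in terms of Dedekind sums amounts to tracking these square-root branches through the recursion, and for this precise formula I cite Rademacher \cite{Rad}.
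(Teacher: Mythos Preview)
The paper does not actually prove Lemma~\ref{ETtrans}; it records the transformation law as a classical fact and cites Rademacher~\cite{Rad} for the explicit multiplier. Your proposal therefore goes well beyond what the paper does, supplying a complete (and standard) argument where the paper gives only a citation.

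Your proof is correct. The $T$-step is immediate, the $S$-step via the logarithmic derivative and the quasimodular transformation of $E_2$ is a well-known and valid route, and the induction on word length in $S,T$ together with the observation that all building-block multipliers (and the branch signs $\pm1$) lie in $\mu_{24}$ gives the general statement. The alternative one-line argument via $\eta^{24}=\Delta$ being a genuine weight-$12$ modular form is also fine and in fact the cleanest way to see $\rho(\gamma)^{24}=1$. Two small remarks: (i) your exponent for $\sqrt{-i}$ is off --- $\sqrt{-i}=e^{-\pi i/4}=(e^{\pi i/12})^{-3}$, not $(e^{\pi i/12})^{-6}$ --- but this is irrelevant to the conclusion since either way the value lies in $\mu_{24}$; (ii) you invoke the quasimodularity of $E_2$ from \S\ref{22sec}, which the paper also only states without proof. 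This is logically harmless because the $E_2$ transformation can be established independently of $\eta$ (e.g.\ by Hecke's trick applied to the weight-$2$ Eisenstein series), but it is worth being aware that many references derive the $E_2$ law \emph{from} the $\eta$ law, so one should be explicit about avoiding circularity.
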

We also encounter the modular Eisenstein series $E_{2j}(\tau)$, defined for  integers $j\geq 1$ by\begin{align}\label{eis}
E_{2j}(\tau) := 1 - \frac{4j}{B_{2j}} \sum_{\ell\geq 1} \sigma_{2j-1}(\ell) q^{\ell},
\end{align}
where
$\sigma_r(\ell) := \sum_{d|\ell} d^r$, and $B_r$ denotes the $r$-th  Bernoulli number (cf. \eqref{bergen}).  For $j\geq 2$, it is well known that the Eisenstein series $E_{2j}(\tau)$ are modular forms of weight $2j$ on $\textnormal{SL}_2(\mathbb Z)$.

\subsection{Almost holomorphic modular forms and quasimodular forms}\label{22sec}

We also encounter \emph{almost holmorphic modular forms}, which, as originally defined by Kaneko-Zagier \cite{KZ}, are functions that transform like usual modular forms, but may also include non-holomorphic terms expressed as polynomials in $1/v$ with coefficients holomorphic in $\mathbb{H}$ allowing poles at the cusps, where throughout we write $\tau=u+iv$.

 Standard examples of almost holomorphic modular
form include derivatives of holomorphic modular forms, as well as the non-holomorphic Eisenstein series $\widehat{E}_2$, defined by
\begin{align}\label{e2hat}
\widehat{E}_2(\tau) := E_2(\tau) - \frac{3}{\pi v}.
\end{align}
In this case the ``holomorphic part" is simply
\begin{align}\label{E2holdef}E_2(\tau) = 1-24\sum_{n\geq 1} \sigma_1(n) q^n.
 \end{align}
In general, the holomorphic part of an almost holomorphic modular form is called a {\it quasimodular form}.  After their introduction \cite{KZ}, almost holomorphic modular forms have been shown to play numerous roles in mathematics and physics  (see for example
the work of Aganagic-Bouchard-Klemm \cite{ABK}).

 \subsection{Jacobi forms}\label{23sec}
Holomorphic Jacobi form are two-variable relatives to modular forms that were first defined by Eichler and Zagier \cite{EZ} as follows.
\begin{definition}\label{harmjacdef} A \emph{holomorphic Jacobi form of weight $\kappa$ and index $M$} ($\kappa, M \in \mathbb N$) on congruence a subgroup $\Gamma \subseteq \textnormal{SL}_2(\mathbb Z)$  is a holomorphic function
$\phi(z;\tau):\mathbb C \times \mathbb H \to
\mathbb C$ that satisfies the following properties for all $\gamma = \sm{a}{b}{c}{d} \in \Gamma$ and $\lambda,\mu \in \mathbb Z.$
 \begin{enumerate}\item
$\phi\!\left(\frac{z}{c\tau + d};\gamma\tau\right) = (c\tau + d)^\kappa e^{\frac{2\pi i M c z^2}{c\tau + d}} \phi(z;\tau)$,
\item $\phi(z + \lambda \tau + \mu;\tau) = e^{-2\pi i M (\lambda^2\tau + 2\lambda z)} \phi(z;\tau)$,
\item $\phi(z;\tau)$ has a Fourier development of the form $\sum_{n, r} c(n,r)q^n \zeta^r$, with $c(n,r)=0$ unless $n\geq r^2/4M$.
\end{enumerate}\end{definition}
\noindent For the sake of brevity, we have not given the most general definition above, but the theory of Jacobi forms developed in \cite{EZ} does also allow for half-integral weights, multiplier systems, and meromorphic functions, with the obvious modifications (cf. Definition \ref{D:modular}).

The most important Jacobi form used in our treatment is Jacobi's theta function, which is defined by
\begin{equation}
 \vartheta(z;\tau)=\vartheta(z):=\sum_{\nu\in\frac12+\Z}e^{\pi i \nu^2\tau+2\pi i\nu\left(z+\frac12\right)}.\label{thedefn}
\end{equation}
This function  is well known to satisfy the following transformation law \cite[(80.31) and (80.8)]{Rad}.
 \begin{lemma}\label{thetatrans}
   For $\lambda,\mu \in \mathbb Z$, and $\gamma=\sm{a}{b}{c}{d}\in \textnormal{SL}_2(\mathbb Z)$, we have that
 \begin{align}\label{thetae}
  \vartheta(z+\lambda \tau + \mu;\tau) &= (-1)^{\lambda + \mu} q^{-\frac{\lambda^2}{2}}e^{-2\pi i \lambda z} \vartheta(z;\tau), \\ \label{thetam}
   \vartheta\left(\frac{z}{c\tau+d};\gamma\tau\right) &= \rho^3(\gamma)  (c\tau + d)^{\frac{1}{2}}e^{\frac{\pi i c z^2}{c\tau + d}} \vartheta(z;\tau),
   \end{align}
    where $\rho(\gamma)$ is as defined in Lemma \ref{ETtrans}.
 \end{lemma}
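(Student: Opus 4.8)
The plan is to derive both transformation laws directly from the series definition \eqref{thedefn}. For the elliptic law \eqref{thetae}, I would substitute $z \mapsto z + \lambda\tau + \mu$ and complete the square in the summation variable: writing the exponent as $\pi i \tau(\nu+\lambda)^2 - \pi i \lambda^2\tau + 2\pi i(\nu+\lambda)\left(z+\mu+\frac12\right) - 2\pi i\lambda\left(z+\mu+\frac12\right)$ and reindexing by $\nu':=\nu+\lambda$, which again runs over $\frac12+\Z$ because $\lambda\in\Z$, recovers $\vartheta(z;\tau)$. Collecting the prefactors gives $e^{-\pi i\lambda^2\tau}=q^{-\lambda^2/2}$ and $e^{-2\pi i\lambda z}$, while the residual phases $e^{-2\pi i\lambda\mu}=1$, $e^{-\pi i\lambda}=(-1)^\lambda$, and the factor $e^{2\pi i\nu'\mu}=(-1)^\mu$ (valid since $\nu'\in\frac12+\Z$) combine to the sign $(-1)^{\lambda+\mu}$. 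This part is routine bookkeeping.

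The modular law \eqref{thetam} is the substantive part, and I would first treat the generators $T=\sm{1}{1}{0}{1}$ and $S=\sm{0}{-1}{1}{0}$ of $\textnormal{SL}_2(\Z)$. The $T$-case is immediate: for $\nu\in\frac12+\Z$ one has $\nu^2-\frac14\in2\Z$, hence $e^{\pi i\nu^2}=e^{\pi i/4}$, so $\vartheta(z;\tau+1)=e^{\pi i/4}\vartheta(z;\tau)$, which agrees with \eqref{thetam} since here $c=0$, $(c\tau+d)^{1/2}=1$, and $\rho^3(T)=e^{\pi i/4}$. The $S$-case is the heart of the matter: I would apply the Poisson summation formula to the sum over $\nu=n+\frac12$, so that the Gaussian $\exp\!\left(-\pi i\nu^2/\tau+2\pi i\nu(z/\tau+\frac12)\right)$ is replaced by its Fourier transform. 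The resulting Gaussian integral converges because $\re(-\pi i/\tau)=-\pi\,\im(\tau)/|\tau|^2<0$, and evaluating it produces exactly the factors $\tau^{1/2}$ and $e^{\pi i z^2/\tau}$ together with a reassembly of $\vartheta(z;\tau)$ as a sum over the dual index; matching the resulting constant against $\rho^3(S)=e^{-3\pi i/4}$ completes this case.

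Finally I would extend the two generator identities to an arbitrary $\gamma\in\textnormal{SL}_2(\Z)$ by induction on the word length in $S$ and $T$, checking at each step that the Jacobi automorphy factor $(c\tau+d)^{1/2}e^{\pi i cz^2/(c\tau+d)}$ obeys the cocycle relation needed for the factors to multiply correctly under composition. The consistency of the multiplier is inherited directly from Dedekind's $\eta$: since $\rho$ is the $\eta$-multiplier of Lemma \ref{ETtrans}, the assignment $\gamma\mapsto\rho^3(\gamma)$ satisfies the same factor system and is therefore a well-defined multiplier for the weight-$\frac12$ transformation. I expect the main obstacle to be the $S$-transformation: one must justify the application of Poisson summation to a Gaussian of complex width and, more delicately, fix the branch of $\tau^{1/2}$ so that the constant comes out as $e^{-3\pi i/4}$ and not its negative. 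Propagating these branch choices coherently through the induction, so that the half-integral automorphy factors and the root of unity $\rho^3(\gamma)$ remain consistent for every $\gamma$, is where the argument requires the most care.
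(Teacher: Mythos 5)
Your proposal is correct, but it takes a genuinely different route from the paper, for the simple reason that the paper does not prove this lemma at all: it quotes both transformation laws from Rademacher's book \cite{Rad} (formulas (80.31) and (80.8)). Your completing-the-square computation for \eqref{thetae} is complete and correct, and your plan for \eqref{thetam} — check $T$ directly, check $S$ by Poisson summation, then extend to all of $\textnormal{SL}_2(\Z)$ by induction on word length — is the standard self-contained derivation, with the constants correctly identified: $\rho(T)=e^{\pi i/12}$ and $\rho(S)=e^{-\pi i /4}$ from Lemma \ref{ETtrans} give exactly your $\rho^3(T)=e^{\pi i/4}$ and $\rho^3(S)=e^{-3\pi i/4}$. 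Your key structural claim, that $\rho^3$ is automatically a consistent weight-$\frac12$ multiplier because $\rho$ is, holds for a reason worth making explicit: if $\sigma(\gamma_1,\gamma_2)=\pm 1$ denotes the sign discrepancy $(c_1\gamma_2\tau+d_1)^{1/2}(c_2\tau+d_2)^{1/2}(c_{12}\tau+d_{12})^{-1/2}$, then the $\eta$-transformation forces $\rho(\gamma_1\gamma_2)=\rho(\gamma_1)\rho(\gamma_2)\sigma(\gamma_1,\gamma_2)$, and cubing preserves this relation since $\sigma^3=\sigma$; this is precisely the cocycle consistency your induction needs. You could also shortcut the most delicate step you flag (propagating the constant $\rho^3(\gamma)$ through the induction): once the induction shows $\vartheta$ transforms with weight $\frac12$, index $\frac12$, and \emph{some} unimodular multiplier, differentiating \eqref{thetam} in $z$ at $z=0$ and invoking $\vartheta^\ast(0;\tau)=-2\pi\eta^3(\tau)$ (which the paper itself records in \eqref{thetase2}) identifies that multiplier with the one of $\eta^3$, namely $\rho^3$. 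In short: the paper's citation is the economical choice for a research article, while your argument is self-contained and explains structurally why the theta multiplier is the cube of the eta multiplier rather than merely asserting it.
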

\noindent We note that Lemma \ref{thetatrans} essentially says that $\vartheta$ is a Jacobi form of weight and index $1/2$ with multiplier $\rho^3$.

The Jacobi theta function also satisfies the well known triple product identity
 \begin{align}\label{JTP}
\vartheta(z;\tau) = -i q^{\frac{1}{8}} \zeta^{-\frac{1}{2}} \prod_{r\geq 1} (1-q^r)\left(1-\zeta q^{r-1}\right) \left(1-\zeta^{-1}q^r\right),
\end{align} where throughout $\zeta := e^{2\pi i z}$.

\section{Proof of the theorems}\label{proofsec}
As mentioned in \S \ref{introsec} (and contrary to the case $n=1$) we do not have a multivariable Appell-Lerch sum expression for $tr_{L_{m,n}(\Lambda_{(\ell)})}q^{L_0}$, and therefore we instead
consider the generating function given by Kac-Wakimoto for the specialized characters $\textnormal{ch}F_\ell$ as in \eqref{KWgenf}.
It is not difficult to see, using \eqref{JTP},  that we may rewrite the specialized Kac-Wakimoto generating function \eqref{KWgenf} as a quotient of Jacobi theta functions
\begin{equation} \label{chphiformula}
 \text{ch}F
 =i^m \varphi\left(z + \frac{\tau}{2};\tau\right),
\end{equation}
where  \begin{align}\label{phidef}
\varphi(z; \tau):=\left(\frac{\vartheta\left(z+\frac12;\tau \right)}{\vartheta(z;\tau)}\right)^m.
\end{align}
Given \eqref{chphiformula} and Remark \ref{explain} in \S \ref{introsec}, we proceed by investigating the automorphic properties of the Fourier coefficients  $\chi_r(\tau)$ of $\varphi(z;\tau)$ (as a function of $z$)
  $$
  \varphi(z;\tau) = \sum_{r\in\mathbb Z} \chi_r(\tau) \zeta^r.
  $$
 Using Lemma \ref{thetatrans}, noting that $m$ is even, we establish the following transformation properties of the function $\varphi(z;\tau)$.

 \begin{lemma} \label{phitrans}
For $\gamma=\sm{a}{b}{c}{d} \in \Gamma_0(2)$, and $\lambda,\mu \in \mathbb Z$, we have that \begin{align*} \varphi(z+\lambda \tau + \mu;\tau) &=\varphi(z;\tau), \\
\varphi\left(\frac{z}{c\tau+d};\gamma\tau\right) &= \psi(\gamma) \varphi(z;\tau).
\end{align*}
That is, $\varphi$ is a (meromorphic) Jacobi form of weight $0$ and index $0$ with Nebentypus character $\psi$.
\end{lemma}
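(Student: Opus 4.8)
The plan is to reduce both identities to the transformation laws for Jacobi's theta function recorded in Lemma \ref{thetatrans}, exploiting the fact that $\varphi$ is an $m$-th power of a ratio of thetas (see \eqref{phidef}): automorphy factors and multipliers will appear raised to the even power $m$, which is what forces them to collapse to a sign. I would treat the two claimed transformations separately.

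For the elliptic transformation I would apply the quasi-periodicity \eqref{thetae} to both the numerator $\vartheta(z+\frac12+\lambda\tau+\mu;\tau)$, viewing the lattice shift as acting on the argument $z+\frac12$, and the denominator $\vartheta(z+\lambda\tau+\mu;\tau)$. The common factors $(-1)^{\lambda+\mu}q^{-\lambda^2/2}$ cancel in the ratio, and the two exponentials $e^{-2\pi i\lambda(z+1/2)}$ and $e^{-2\pi i\lambda z}$ combine to $e^{-\pi i\lambda}=(-1)^\lambda$. Raising to the $m$-th power produces $(-1)^{\lambda m}=1$ since $m$ is even, which yields $\varphi(z+\lambda\tau+\mu;\tau)=\varphi(z;\tau)$.

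The modular transformation is the substantive step. I would first rewrite $\frac{z}{c\tau+d}+\frac12=\frac{w}{c\tau+d}$ with $w:=z+\frac{c\tau+d}{2}$, so that the numerator theta becomes $\vartheta\big(\frac{w}{c\tau+d};\gamma\tau\big)$ and both numerator and denominator can be handled by the modular law \eqref{thetam}; crucially, the multiplier $\rho^3(\gamma)$ and the automorphy factor $(c\tau+d)^{1/2}$ are identical in numerator and denominator and cancel in the ratio. The remaining Gaussian exponentials combine into $\exp\big(\frac{\pi ic(w^2-z^2)}{c\tau+d}\big)$, and since $w^2-z^2=z(c\tau+d)+\frac{(c\tau+d)^2}{4}$ this simplifies to $e^{\pi icz}e^{\pi ic(c\tau+d)/4}$. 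To relate $\vartheta(w;\tau)$ back to $\vartheta(z+\frac12;\tau)$ I would use that $\gamma\in\Gamma_0(2)$ forces $c$ even and $d$ odd, so $w=(z+\frac12)+\frac{c}{2}\tau+\frac{d-1}{2}$ has integral lattice coefficients, and one further application of \eqref{thetae} expresses $\vartheta(w;\tau)$ through $\vartheta(z+\frac12;\tau)$.

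The main obstacle will be the bookkeeping of all these exponential and sign factors. I expect the $z$-dependent exponentials to cancel (the $e^{\pi icz}$ against the $e^{-2\pi i(c/2)z}$ coming from \eqref{thetae}) and the $\tau$-dependent ones to cancel as well (the $q$-power from \eqref{thetae} against the $\tau$-part of the Gaussian), leaving only a root of unity of the shape $(-1)^{(d-1)/2}e^{\pi icd/4}$. Raising to the $m$-th power and using that $m$ and $c$ are even while $d$ is odd, the factor $(-1)^{(d-1)m/2}$ is trivial and $e^{\pi icdm/4}=(-1)^{mc/4}$, so the multiplier collapses to $(-1)^{mc/4}=\psi(\gamma)$, as defined in \eqref{psichar}. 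The concluding assertion that $\varphi$ is a meromorphic Jacobi form of weight and index $0$ with Nebentypus $\psi$ is then immediate from the two displayed transformation laws compared against Definition \ref{harmjacdef} (the absence of any factor $(c\tau+d)^\kappa$ or $e^{2\pi iMcz^2/(c\tau+d)}$ giving $\kappa=M=0$), with the poles arising precisely from the zeros of $\vartheta(z;\tau)$ in the denominator.
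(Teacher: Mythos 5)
Your proof is correct and takes precisely the route the paper intends: the paper states Lemma \ref{phitrans} without a written proof, remarking only that it follows from Lemma \ref{thetatrans} together with the evenness of $m$, and your argument is exactly that computation carried out in full. Your bookkeeping checks out — the substitution $w=z+\frac{c\tau+d}{2}$, the cancellation of the $z$- and $\tau$-dependent exponentials, and the collapse of the residual multiplier $(-1)^{\frac{d-1}{2}}e^{\frac{\pi i cd}{4}}$ to $\psi(\gamma)=(-1)^{\frac{mc}{4}}$ upon raising to the $m$-th power (using $c$ even, $d$ odd, $m$ even) are all accurate.
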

Using the fact that $\vartheta(z;\tau)$ is an odd function in $z$ together with \eqref{thetae}, we find that  $\varphi(z;\tau)$ is an even function of $z$.  Furthermore, the function $\varphi(z;\tau)$ has a pole of order $m$ at $z=0$, so we may thus write the Laurent series expansion
 $$
 \varphi(z;\tau) = \frac{D_m(\tau)}{(2\pi i z)^m} + \frac{D_{m-2}(\tau)}{(2\pi i z)^{m-2}} + \cdots + \frac{D_2(\tau)}{(2\pi i z)^2} + D_0(\tau) + O\left(z^2\right).
 $$

Using Lemma \ref{phitrans}, it is not difficult to deduce the following modular transformation properties of the functions $D_{2j}$.
 \begin{lemma}\label{hjtrans}
 For all $\gamma = \sm{a}{b}{c}{d} \in \Gamma_0(2),$ we have that for each $0\leq j \leq m/2$
\begin{align*}
D_{2j}(\gamma\tau) &= \psi(\gamma)(c\tau+d)^{-2j} D_{2j}(\tau).
\end{align*}
Moreover the functions $D_{2j}$ are weakly holomorphic on $\mathbb{H}$.
\end{lemma}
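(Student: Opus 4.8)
The plan is to read off both assertions from the Laurent expansion of $\varphi(z;\tau)$ at $z=0$ together with the modular transformation recorded in Lemma~\ref{phitrans}. I begin from the expansion
\[
\varphi(z;\tau) = \sum_{j=0}^{m/2} \frac{D_{2j}(\tau)}{(2\pi i z)^{2j}} + O\left(z^2\right),
\]
valid because $\varphi$ is even in $z$ and meromorphic with a pole of order exactly $m$ at $z=0$; crucially, this pole structure is the same for every modular parameter, since $\varphi(\cdot;\tau')$ is a theta quotient of the shape \eqref{phidef} for each $\tau'\in\mathbb{H}$. The uniqueness of Laurent coefficients of a meromorphic function is the fact that legitimizes the coefficient matching below.

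Next I apply Lemma~\ref{phitrans}, which gives $\varphi\!\left(\frac{z}{c\tau+d};\gamma\tau\right) = \psi(\gamma)\varphi(z;\tau)$ for $\gamma=\sm{a}{b}{c}{d}\in\Gamma_0(2)$. Expanding the left-hand side by substituting $w=\frac{z}{c\tau+d}$ into the Laurent series of $\varphi(\cdot;\gamma\tau)$ and using $(2\pi i w)^{-2j}=(c\tau+d)^{2j}(2\pi i z)^{-2j}$ produces
\[
\sum_{j=0}^{m/2} \frac{D_{2j}(\gamma\tau)(c\tau+d)^{2j}}{(2\pi i z)^{2j}} + O\left(z^2\right),
\]
while the right-hand side equals $\psi(\gamma)\sum_{j=0}^{m/2} D_{2j}(\tau)(2\pi i z)^{-2j}+O(z^2)$. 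Comparing the coefficient of $z^{-2j}$ for each $0\le j\le m/2$ yields $D_{2j}(\gamma\tau)(c\tau+d)^{2j}=\psi(\gamma)D_{2j}(\tau)$, which is the desired transformation law after dividing through by $(c\tau+d)^{2j}$.

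For the holomorphy claim I would return to the theta-quotient formula \eqref{phidef}. Near $z=0$ the factor $\vartheta(z+\tfrac12;\tau)^m$ is holomorphic and nonvanishing, since $\vartheta(\tfrac12;\tau)\ne 0$ on $\mathbb{H}$, while $\vartheta(z;\tau)$ is odd with a simple zero at $z=0$, so its leading Taylor coefficient $\frac{\partial}{\partial z}\vartheta(0;\tau)$ is holomorphic and nonvanishing on $\mathbb{H}$. Hence each $D_{2j}(\tau)$, being a polynomial in the Taylor coefficients of the two theta functions divided by a power of this nonvanishing leading coefficient, is holomorphic on all of $\mathbb{H}$; the only possible singularities are poles at the cusps arising from the $q$-expansions of the theta functions, which is exactly what makes the $D_{2j}$ weakly holomorphic.

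The computation is essentially routine, so the only points demanding care are the two I have flagged: first, that the termwise comparison under the scaling $z\mapsto z/(c\tau+d)$ is legitimate, which follows once one invokes uniqueness of Laurent expansions and notes that the $O(z^2)$ remainders stay $O(z^2)$ for fixed $\tau$; and second, the nonvanishing of the leading theta coefficient on $\mathbb{H}$, a classical property of $\vartheta$ that guarantees no spurious poles appear in the $D_{2j}$ on the interior.
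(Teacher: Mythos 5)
Your proof is correct and follows exactly the route the paper intends: the paper states that Lemma~\ref{hjtrans} follows from Lemma~\ref{phitrans}, and your Laurent-coefficient comparison under $z\mapsto z/(c\tau+d)$, together with the observation that $\vartheta'(0;\tau)=-2\pi\eta^3(\tau)$ is nonvanishing on $\mathbb{H}$ (so the coefficients $D_{2j}$ are holomorphic there, with at worst poles at the cusps), is precisely the deduction the authors leave to the reader.
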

Following \cite{DMZ}, we define the  ``finite part" $\varphi^F(\tau)$ and the ``polar part" $\varphi^P(z;\tau)$ of the form $\varphi(z;\tau)$ by
\begin{align}
\label{phif} \varphi^F(\tau) &:= D_0(\tau) +\sum_{j=1}^{\frac{m}{2}} \frac{ B_{2j}}{(2j)!} D_{2j}(\tau)E_{2j}(\tau), \\
\label{phip}
\varphi^P(z;\tau) &:=  \sum_{j=1}^\frac{m}{2} \frac{D_{2j}(\tau)}{(2j-1)!}\sum_{r\neq 0}\frac{r^{2j-1}}{1-q^r}\zeta^r,
\end{align}
where the Eisenstein series $E_{2j}(\tau)$ are defined in \eqref{eis}.
\begin{proposition}\label{prop5} We have that
\begin{align*}
 \varphi(z;\tau) &=   \varphi^F(\tau) + \varphi^P(z;\tau),
\end{align*}
where $\varphi^F(\tau)$ and $\varphi^P(z;\tau)$ are defined in \eqref{phif} and \eqref{phip}, respectively.
\end{proposition}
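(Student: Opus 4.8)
The plan is to prove the proposition by computing the Fourier coefficients $\chi_r(\tau)$ of $\varphi(z;\tau)$ directly and matching them against the two pieces; since $\varphi^F$ is independent of $z$ and $\varphi^P$ in \eqref{phip} has no constant term, this simultaneously establishes Theorem \ref{prop1}. Concretely, it suffices to show that $\chi_r = \frac{1}{1-q^r}\sum_{j=1}^{m/2}\frac{r^{2j-1}}{(2j-1)!}D_{2j}$ for $r\neq 0$ (which identifies $\sum_{r\neq 0}\chi_r\zeta^r$ with $\varphi^P$) and that $\chi_0 = D_0+\sum_{j=1}^{m/2}\frac{B_{2j}}{(2j)!}D_{2j}E_{2j}$ (which identifies it with $\varphi^F$ in \eqref{phif}). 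For $r\neq 0$ I would extract $\chi_r$ as the integral of $\varphi(z;\tau)e^{-2\pi i rz}$ over a horizontal segment of length one at height $\im(z)=v_0$ with $0<v_0<\im(\tau)$, the range dictated by Remark \ref{explain}.

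Using that $\varphi$ is elliptic in $z$ by Lemma \ref{phitrans}, while $e^{-2\pi i rz}$ acquires a factor $q^{-r}$ under $z\mapsto z+\tau$, the integral at height $v_0$ and the integral at height $v_0-\im(\tau)$ are proportional with factor $q^{-r}$, and their difference equals $2\pi i\,\Res_{z=0}\bigl[\varphi(z;\tau)e^{-2\pi i rz}\bigr]$ (the vertical sides cancel by $1$-periodicity). After tracking signs this gives $(1-q^r)\chi_r=\sum_{j}\frac{r^{2j-1}}{(2j-1)!}D_{2j}$: feeding in the given even Laurent expansion of $\varphi$, only the pole terms $D_{2j}/(2\pi i z)^{2j}$ contribute to the residue, each paired with the degree-$(2j-1)$ term of $e^{-2\pi i rz}$. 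This already yields $\varphi = \chi_0 + \varphi^P$.

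The remaining step, computing $\chi_0$, is the main obstacle, since the residue argument degenerates at $r=0$ (the factor $1-q^r$ vanishes, and the residue is zero because $\varphi$ is even). Instead I would use $\chi_0 = D_0 - [z^0]\varphi^P$, obtained by comparing constant terms in $\varphi = \chi_0 + \varphi^P$ and noting that the $z^0$-coefficient of $\varphi$ is $D_0$. To evaluate the constant term of the polar part I would identify its building blocks with logarithmic derivatives of $\vartheta$: from the triple product \eqref{JTP} one gets, in the region $0<\im(z)<\im(\tau)$, the expansion $\frac{1}{2\pi i}\frac{\vartheta'(z;\tau)}{\vartheta(z;\tau)} = -\tfrac12 - \sum_{r\neq 0}\frac{\zeta^r}{1-q^r}$, and applying $\bigl(\frac{1}{2\pi i}\partial_z\bigr)^{2j-1}$ expresses $\sum_{r\neq 0}\frac{r^{2j-1}}{1-q^r}\zeta^r$ as $-\bigl(\frac{1}{2\pi i}\partial_z\bigr)^{2j}\log\vartheta(z;\tau)$. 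The delicate point is that this Fourier series must be analytically continued to its Laurent expansion at $z=0$, a wall-crossing phenomenon sensitive to the chosen region. Using the classical relation $-\partial_z^2\log\vartheta(z;\tau) = \wp(z;\tau)+\frac{\pi^2}{3}E_2(\tau)$ together with the Eisenstein expansion of $\wp$, the $z^0$-coefficient of each block equals $-\frac{B_{2j}}{(2j)!}E_{2j}(\tau)$, the case $j=1$ being precisely where the quasimodular $E_2$ of \eqref{E2holdef} enters. Summing over $j$ gives $[z^0]\varphi^P = -\sum_j \frac{B_{2j}}{(2j)!}D_{2j}E_{2j}$, whence $\chi_0 = \varphi^F$ and $\varphi = \varphi^F + \varphi^P$. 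The hardest parts will be controlling signs and normalizations through the residue and the $\log\vartheta$ expansion, and rigorously justifying the analytic continuation of the polar Fourier series to the Laurent expansion at $z=0$.
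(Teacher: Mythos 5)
Your proposal is correct, and it takes a genuinely different route from the paper's proof. The paper never computes a Fourier coefficient directly: it rewrites $\varphi^P$ via the Lerch-type sums $\sum_{r\geq 0}\frac{1}{1-\zeta q^r}-\sum_{r>0}\frac{\zeta^{-1}q^r}{1-\zeta^{-1}q^r}$, which are elliptic and meromorphic on all of $\C$, observes via \eqref{diff1} that their poles cancel those of $\varphi$, concludes from Liouville's theorem that $\varphi-\varphi^P$ is constant in $z$, and evaluates that constant in the limit $z\to 0$ using the Lambert-series identity \eqref{part1} and the Bernoulli identity \eqref{part2}. You instead extract the coefficients themselves: the contour-shift/residue computation for $r\neq 0$ (precisely the Dabholkar--Murthy--Zagier recipe for Fourier coefficients of meromorphic Jacobi forms, used also in \cite{BF}) yields $(1-q^r)\chi_r=\sum_{j=1}^{m/2}\frac{r^{2j-1}}{(2j-1)!}D_{2j}(\tau)$ with the right signs, since
$$
\Res_{z=0}\left[\frac{D_{2j}(\tau)}{(2\pi iz)^{2j}}\,e^{-2\pi irz}\right]=-\frac{r^{2j-1}D_{2j}(\tau)}{2\pi i\,(2j-1)!},
$$
and your identification of $\sum_{r\neq 0}\frac{r^{2j-1}}{1-q^r}\zeta^r$ with derivatives of $\log\vartheta$ in the strip $0<\im(z)<\im(\tau)$ supplies exactly the meromorphic continuation across the wall at $z=0$ that the paper gets for free from its Lerch sums; the constant-term matching then gives $\chi_0=\varphi^F$. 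The two proofs consume the same arithmetic input in different packaging -- the Eisenstein expansion of $\wp^{(2j-2)}$ encodes the paper's \eqref{part1}--\eqref{part2} -- but the mechanisms differ: the paper's Liouville shortcut avoids all contour bookkeeping, while your residue extraction proves the coefficient formulas of Theorem \ref{prop1} en route rather than reading them off afterwards, and it generalizes to settings (e.g.\ nonzero index, as in \cite{DMZ}) where the polar part is not elliptic and the Liouville argument is unavailable.

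Two normalization points must be repaired in the write-up, as you yourself anticipate. First, with the paper's $\partial_z=\frac{1}{2\pi i}\frac{d}{dz}$, the classical relation is $-\frac{d^2}{dz^2}\log\vartheta=\wp+\frac{\pi^2}{3}E_2$, hence $-\partial_z^{2}\log\vartheta=-\frac{1}{4\pi^2}\left(\wp+\frac{\pi^2}{3}E_2\right)$; your displayed identity is off by the factor $-\frac{1}{4\pi^2}$, although your final constant terms are normalized correctly. Second, the constant term $-\frac{B_{2j}}{(2j)!}E_{2j}(\tau)$ is that of the block \emph{including} the factor $\frac{1}{(2j-1)!}$, i.e.\ of $\frac{1}{(2j-1)!}\sum_{r\neq 0}\frac{r^{2j-1}}{1-q^r}\zeta^r$; the bare sum has constant term $-\frac{B_{2j}}{2j}E_{2j}(\tau)$. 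With these tracked, your argument closes and reproduces \eqref{phif} exactly.
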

\begin{proof}
We start by rewriting \eqref{phip}. A direct calculation gives that
\[
\sum_{r\neq 0}\frac{r^{2j-1}}{1-q^r}\zeta^r=\partial_z^{2j-1}\left(\sum_{r\geq 0}\frac{1}{1-\zeta q^r}-\sum_{r>0}\frac{\zeta^{-1}q^r}{1-\zeta^{-1}q^r}\right)
\]
with $\partial_z:=\frac{1}{2\pi i}\frac{d}{dz}$.
 Next we note that we have
 $$
 \left(\sum_{r \geq 0} \frac{1}{1-\zeta q^r} -
 \sum_{r > 0} \frac{\zeta^{-1}q^r}{1-\zeta^{-1}q^r}\right)
=-\frac{1}{2\pi iz}+O(1),
$$
and thus
\begin{align}\label{diff1}
\partial_z^{2j-1} \left(\sum_{r \geq 0} \frac{1}{1-\zeta q^r}
- \sum_{r > 0} \frac{\zeta^{-1}q^r}{1-\zeta^{-1}q^r}\right)=\frac{(2j-1)!}{(2\pi iz)^{2j}}+O(1).
\end{align}
Thus, from   Lemma \ref{phitrans}, \eqref{phip}, and \eqref{diff1}, we deduce that $\varphi-\varphi^P$ is a bounded, holomorphic, function (in $z$), hence by Liouville's theorem, is constant.  To determine its value, we compute
\begin{equation}\label{compute}
\begin{split}
\varphi(z;\tau)-\varphi^P(z;\tau)&
=\lim_{z\to 0}\left(\varphi(z;\tau)-\sum_{j=1}^{\frac{m}{2}}\frac{D_{2j}(\tau)}{(2j-1)!}\partial_z^{2j-1}
\left(\frac{1}{1-\zeta}\right)\right)
\\&{\hspace{.8in}}
-\sum_{j=1}^{\frac{m}{2}}D_{2j}(\tau)\frac{\partial_z^{2j-1}}{(2j-1)!}
\left[\sum_{r>0}\frac{1}{1-\zeta q^r}-\sum_{r>0}\frac{\zeta^{-1}q^r}{1-\zeta^{-1}q^r}\right]_{z=0}.
\end{split}
\end{equation}
First, we find that
\begin{align}\nonumber
\partial_z^{2j-1}\left[\sum_{r>0}\frac{1}{1-\zeta q^r}
-\sum_{r>0}\frac{\zeta^{-1}q^r}{1-\zeta^{-1}q^r}\right]_{z=0}
&=\partial_z^{2j-1}\left[
\sum_{r>0\atop{\ell\geq 0}}\zeta^{\ell} q^{r \ell}
-\sum_{r>0\atop{\ell\geq 0}} \zeta^{-(\ell+1)}q^{r(\ell+1)}
\right]_{z=0}
\\ \label{part1}
&=2\sum_{r, \ell>0}\ell^{2j-1}q^{r \ell} = \frac{B_{2j}}{2j}\left(1-E_{2j}(\tau)\right).
\end{align}
Using the generating function for Bernoulli numbers \eqref{bergen}, we find moreover that
\begin{align}\label{part2}
\partial_z^{2j-1}\left[\frac{1}{1-\zeta}+\frac{1}{2\pi iz}\right]_{z=0}
=-\frac{B_{2j}}{2j}.
\end{align}
 Finally, we use the fact that
\begin{align}\label{part3}
\partial_z^{2j-1} \left(\frac{1}{2\pi i z}\right)
= -\frac{(2j-1)!}{(2 \pi i z)^{2j}}.
\end{align}
Inserting \eqref{part1}, \eqref{part2}, and \eqref{part3} into \eqref{compute},  we find that
   \begin{align*}
\varphi(z;\tau) - \varphi^P(z;\tau) =
D_0(\tau) + \sum_{j=1}^{\frac{m}{2}}\frac{B_{2j}}{(2j)!}D_{2j}(\tau)E_{2j}(\tau)=\varphi^F(\tau).
\end{align*}
\end{proof}
\begin{proof}[Proof of Theorem \ref{prop1}]  The expressions for $\chi_r$ in Theorem \ref{prop1} follow immediately in Proposition \ref{prop5}.
That the functions $D_{2j}(\tau)$ transform as claimed follows from Lemma \ref{hjtrans}.
\end{proof}
Next we define the following (non-holomorphic) completed functions
\begin{align}
\label{phifhat} \widehat{\varphi}^F(\tau) &:= \varphi^F(\tau) -\frac{D_2(\tau)}{4\pi v}, \\
\label{phiphat} \widehat{\varphi}^P(z;\tau) &:=  \varphi^P(z;\tau) + \frac{D_2(\tau)}{4\pi v}.
 \end{align}
\begin{proof}[Proof of Theorem \ref{prop2}]   Lemma \ref{hjtrans} implies that for $ 0 \leq j \leq \frac{m}{2}$, the function $D_{2j}(\tau)$ is a weight $-2j$ modular form on $\Gamma_0(2)$ with Nebentypus character $\psi$; we also know that the Eisenstein series $E_{2j}$ is modular of weight $2j$ on SL$_2(\Z)$ for $j > 1$.  Using these facts, as well as \eqref{e2hat}, we find that the completed function $\widehat{\varphi}^F$ is an almost holomorphic form of weight $0$ on $\Gamma_0(2)$ with Nebentypus character $\psi$, as claimed.   The remainder of Theorem \ref{prop2} follows by further appealing to Lemma \ref{phitrans}, Proposition \ref{prop5}, and the definition of $\widehat{\varphi}^P$ from \eqref{phiphat}.
\end{proof}
\begin{proof}[Proof of Theorem \ref{prop3}]
Since the proof is very similar to the case $m > n$, which is considered in detail in \cite{BF},
we only give a sketch of proof here.  We require certain generalized Euler numbers.
To be more precise, for integers $ k \geq 0$ we define:
 \begin{align}\label{E1defint}
\mathcal{E}_{ k, 1}&:= (-2i)^{-k} E_k,
\ \\ \label{E2defint}
 \mathcal{E}_{k, 2}&:=\frac{2i^{-k}}{\pi}B_k\left(\frac12\right),
\end{align}
where the Euler polynomials $E_k(z)$ and Bernoulli polynomials $B_k(z)$ are given by the generating functions
\begin{align*}
\sum_{k \geq 0} E_k(z) \frac{x^k}{k!} := \frac{2e^{xz}}{e^x + 1}, \qquad
\sum_{k \geq 0} B_k\left(z\right) \frac{x^k}{k!}:= \frac{xe^{zx}}{e^x - 1},
\end{align*}
and $E_k := E_k(0)$.
For any $n \geq 1$ and $k \geq 0$, we build on the initial functions \eqref{E1defint} and \eqref{E2defint} and define generalized Euler functions by the following recurrence:
\begin{align}\label{recurEs}
\mathcal{E}_{k, n+2}=\frac{n}{(n+1)}\mathcal{E}_{k, n}+\frac{k(k-1)}{\pi^2n(n+1)}\mathcal{E}_{k-2, n},
\end{align}
where we take $\mathcal{E}_{ k,n}:=0$ if $k<0$.

Using Cauchy's formula, it is easy to conclude from Lemma \ref{ETtrans} and Lemma \ref{thetatrans} that
\begin{align}\label{trform1}
 {tr}_{L_{m, m}(\Lambda_{(r)})}q^{L_0}
 =\frac{1}{{\sqrt{t}}2^m}   e^{ \frac{\pi t}{12} +\frac{\pi}{12t} (3m-1)  }
\int_{-\frac12}^{\frac12}\frac{e^{-2\pi i r u}}{\cosh\left(\frac{\pi u}{t}\right)^m}
\left(1+O\left(e^{\frac{2\pi |u|}{t}-\frac{\pi}{t}}\right)\right)du.
\end{align}
To complete the proof, it thus suffices to understand the asymptotic behavior of
\begin{align}\label{asympint1}
\int_{-\frac12}^{\frac12}\frac{1}{\cosh\left(\frac{\pi u}{t}\right)^m}
\left(e^{-2\pi i r u}+O\left(e^{\frac{2\pi |u|}{t}-\frac{\pi}{t}}\right)\right)du.
\end{align}
We extend the integral to all of $\R$, where a change of variables then gives
$$
t \int_{\R} \frac{e^{-2\pi i r u t}}{\cosh\left(\pi u\right)^m} du
= t \sum_{k=0}^{N} \frac{(-2 \pi ir t)^k}{k!} \int_{\R} \frac{u^k}{\cosh\left(\pi u\right)^m} du
+  O\left( t^{N+2}\right).
$$
These integrals were evaluted in \cite{BF}, where it was shown that
 $$
 \int_{\R} \frac{u^k}{\cosh\left(\pi u\right)^m} du = \mathcal{E}_{k,m}.
 $$

The error introduced by extending the integration range in \eqref{asympint1} may be bounded by
$$
\int_{\frac12}^{\infty} e^{- \frac{\pi m u}{t}} du \ll t e^{-\frac{\pi m }{2t}  }.
$$
Finally the error coming from the error term in \eqref{asympint1} may be bounded by
 $$
\int_{-\frac12}^{\frac12}\frac{ e^{\frac{2\pi |u|}{t}-\frac{\pi}{t}}  }{\cosh\left(\frac{\pi u}{t}\right)^m}
du
= 2 e^{-\frac{\pi}{t}} \int_{0}^{\frac12} \frac{ e^{\frac{2\pi u}{t}}  }{\cosh\left(\frac{\pi u}{t}\right)^m}
du \ll  e^{-\frac{\pi}{t}} \int_0^{\frac{1}{2}}  e^{\frac{\pi u}{t}(2 - m)} du \ll e^{-\frac{\pi}{t}}.
$$
\end{proof}

\section{Examples}\label{examplesec}
In this section we illustrate Theorem \ref{prop1} by providing examples of the first two cases. Our techniques follow the computations of \S 4 from \cite{BF}. \\

 \noindent {\bf{Example 1.  ($m=2$)} } We start by writing
 \begin{align*}
\vartheta\left(z+\frac12\right)&=\vartheta\left(\frac12\right)+\vartheta^{''}\left(\frac12\right)\frac{z^2}{2!}+O\left(z^4\right),\\
\vartheta^\ast(z) &=\vartheta^\ast(0)+\vartheta^{\ast ''}(0)\frac{z^2}{2!}+O\left(z^4\right),
 \end{align*}
 where $\vartheta^*(z) := \vartheta(z)/z$.
  We therefore have
  \begin{align}\label{m2expan}
\varphi(z)
=\frac{\left(\vartheta\left(\frac12\right)+\vartheta^{''}\left(\frac12\right)\frac{z^2}{2}+O\left(z^4\right)\right)^2}
{z^2\left(\vartheta^\ast(0)+\vartheta^{\ast ''}(0)\frac{z^2}{2}+O\Big(z^4 \Big)\right)^2} = \frac{1}{z^2}\frac{\vartheta\left(\frac12\right)^2}{\vartheta^\ast(0)^2}  \left(\!1\!+\!\left(\!\frac{\vartheta^{''}\left(\frac12\right)}{\vartheta\left(\frac12\right)}
-\frac{\vartheta^{\ast ''}(0)}{\vartheta^\ast(0)}\!\right)z^2\!+\! O\left(z^4\right)\!\right).
\end{align}  A direct calculation yields that
\begin{align}\label{thetase2} \begin{array}[7]{rclcrcl}
 \vartheta\left(\frac{1}{2};\tau\right)  &=&\displaystyle -2 \frac{\eta(2\tau)^2}{\eta(\tau)},  & &
 \vartheta^*(0;\tau) &= &-2\pi \eta^3(\tau), \\
\displaystyle \frac{1}{(2\pi i)^2}\frac{\vartheta^{''}\left(\frac12;\tau\right)}{\vartheta\left(\frac12;\tau\right)}
&=&\displaystyle -\frac{1}{12}E_2(\tau)+\frac13 E_2(2\tau), & & \displaystyle
\frac{1}{(2\pi i)^2}\frac{\vartheta^{\ast ''}(0;\tau)}{\vartheta^\ast(0;\tau)}&=&\frac{1}{12}E_2(\tau).
\end{array}\end{align}
Using these facts together with \eqref{m2expan}, we deduce  that
\begin{align*} D_2(\tau) &= -4\frac{\eta^4(2\tau)}{\eta^8(\tau)}, \ \ \textnormal{ and \ \ }
D_0(\tau) = \frac{2}{3}\frac{\eta^4(2\tau)}{\eta^8(\tau)}\left(E_2(\tau) - 2E_2(2\tau)\right).
\end{align*}

\ \\ \noindent {\bf{Example 2.  ($m=4$)} }
 We may write \begin{align*}
\vartheta\left(z+\frac12\right)&=\vartheta\left(\frac12\right)+\vartheta^{''}\left(\frac12\right)\frac{z^2}{2!}
+\vartheta^{(4)}\left(\frac12\right)\frac{z^4}{4!}+O\left(z^6\right),\\
\vartheta^\ast(z) &=\vartheta^\ast(0)+\vartheta^{\ast ''}(0)\frac{z^2}{2!}+\vartheta^{\ast (4)}(0)\frac{z^4}{4!}+O\left(z^6\right),
\end{align*}
so that
{\small{\begin{align}\nonumber
\varphi(z)
&=\frac{\left(\vartheta\left(\frac12\right)+\vartheta^{''}\left(\frac12\right)\frac{z^2}{2}+\vartheta^{(4)}\left(\frac12\right)\frac{z^4}{4!}+O\Big(z^6\Big)\right)^4}
{z^4\left(\vartheta^\ast(0)+\vartheta^{\ast ''}(0)\frac{z^2}{2}+\vartheta^{\ast (4)}(0)\frac{z^4}{4!}+O\Big(z^6 \Big)\right)^4}
\\ \nonumber
&=\frac{1}{z^4}\frac{\vartheta\left(\frac12\right)^4}{\vartheta^\ast(0)^4}  \Bigg(1+2\left(\frac{\vartheta^{''}\left(\frac12\right)}{\vartheta\left(\frac12\right)}
-\frac{\vartheta^{\ast ''}(0)}{\vartheta^\ast(0)}\right)z^2  +\frac{1}{6} \Bigg( 9 \left(\frac{\vartheta^{''}\!\!\left(\frac12\right)}{\vartheta\!\left(\frac12\right)}\right)^2 \!\!\!- 24 \left(\frac{\vartheta^{''}\!\!\left(\frac12\right)}{\vartheta\!\left(\frac12\right)}\right)\!\left(\frac{\vartheta^{* ''}\!\!\left(0\right)}{\vartheta^*\!\left(0\right)}\right) \\ &{\hspace{1.2in}} + 15 \left(\frac{\vartheta^{* ''}\!\!\left(0\right)}{\vartheta^*\!\left(0\right)}\right)^2 \!\!+ \left(\frac{\vartheta^{ (4)}\!\left(\frac12\right)}{\vartheta \!\left(\frac12\right)}\right) \!-\! \left(\frac{\vartheta^{* (4)}\!\left(0\right)}{\vartheta^* \!\left(0\right)}\right) \!\Bigg)z^4   + O\left(z^6\right)\Bigg).\label{quotlastlinecal}
\end{align}}}

In addition to \eqref{thetase2}, we similarly establish that the remaining  terms in \eqref{quotlastlinecal} satisfy
\begin{align}\label{theta4}
\frac{\vartheta^{(4)}\left(\frac12;\tau\right)}{\vartheta \left(\frac12;\tau\right)} &= (2\pi i)^4 \left(-\frac{7}{48} E_2^2(\tau) - \frac{1}{3} E_2^2(2\tau) + \frac{1}{24} E_4(\tau) + \frac{1}{2} E_2(\tau)E_2(2\tau)\right),\\ \label{thetastar4}
\frac{\vartheta^{* (4)}\left(0;\tau\right)}{\vartheta^*\left(0;\tau\right)} &= (2\pi i)^4\left(-\frac{1}{120}E_4(\tau) + \frac{1}{48}E_2^2(\tau)\right).
\end{align}
Using \eqref{thetase2}, \eqref{quotlastlinecal}, \eqref{theta4}, and \eqref{thetastar4},  we find that
\begin{align*}
D_4(\tau) &= 16\frac{\eta^8(2\tau)}{\eta^{16}(\tau)}, \\
D_2(\tau) &= -\frac{16}{3}\frac{\eta^8(2\tau)}{\eta^{16}(\tau)} \left(E_2(\tau) - 2 E_2(2\tau)\right), \\
D_0(\tau) &= \frac{4}{3} \frac{\eta^8(2\tau)}{\eta^{16}(\tau)}
\left(
\frac{1}{10}E_4(\tau) + \frac{1}{3}\left(E_2(\tau) - 2 E_2(2\tau)\right)^2
\right). \end{align*}

\end{document}